\documentclass[11pt]{amsart}
\usepackage{amsfonts,amssymb,amsmath,amsthm}
\usepackage{url}
\usepackage{enumerate}
\usepackage[dvipdfmx]{graphicx}

\urlstyle{sf}
\newtheorem{theorem}{Theorem}[section]

\newtheorem{lemma}[theorem]{Lemma}

\newtheorem{proposition}[theorem]{Proposition}
\newtheorem{corollary}[theorem]{Corollary}
\theoremstyle{definition}
\newtheorem{definition}[theorem]{Definition}
\newtheorem{remark}[theorem]{Remark}
\numberwithin{equation}{section}

\newtheorem{case}{Case}

\author{Shohei Satake}
\address{
Graduate School of System Informatics, Kobe University \\
Rokkodai 1-1, Nada, Kobe, 657-8501, JAPAN}
\email{155x601x@stu.kobe-u.ac.jp}
\thanks{The author is supported by Grant-in-Aid for JSPS Fellows 18J11282 of the Japan Society for the Promotion of Science.}

\keywords {Cayley graphs, character sums, expander graphs, Galois rings, Ramanujan graphs}

\subjclass[2010]{05C25, 05C50}

\begin{document}

\title[On expander Cayley graphs from Galois rings ]
{On expander Cayley graphs from Galois rings}

\maketitle

\begin{abstract}
In this paper, we study new Cayley graphs over the additive group of Galois rings.
First we prove that they are expander graphs by using a Weil-Carlitz-Uchiyama type estimation of character sums for Galois rings.
We also show that Cayley graphs from Galois rings of characteristic $4$ form a new infinite family of Ramanujan graphs by an elementary eigenvalue estimation. 
Moreover some other spectral properties of our graphs are also discussed.
\end{abstract}

\section{Introduction}
\label{intro}
For a group $\Gamma$ and a subset $S\subset \Gamma \setminus\{0\}$ such that $S=-S$, the {\it Cayley graph} $Cay(\Gamma, S)$ over $\Gamma$ defined by $S$ 
is the graph on the vertex set $\Gamma$ such that two vertices $x$ and $y$ are adjacent if $x-y \in S$.
Here we denote the operation of $\Gamma$ additively and $-S$ is the set of inverses of elements in $S$.
In this paper we deal with Cayley graphs over the additive group of certain finite commutative rings called {\it Galois rings}; see Section~\ref{sec:Galois}.
\par First, we show that they are {\it expander graphs} whose degrees are small powers of the number of vertices.
Especially Cayley graphs constructed from Galois rings of characteristic $4$ form a new infinite family of {\it Ramanujan graphs}.
To explain expander graphs and Ramanujan graphs, we need to explain the adjacency matrix of graphs.
The {\it adjacency matrix} of a graph $G$ is the $0$-$1$ matrix whose rows and columns are indexed by its vertices such that the $(u, v)$-entry is $1$ if and only if $u$ and $v$ are adjacent.
For simplicity, let {\it eigenvalues of $G$} denote eigenvalues of the adjacency matrix of $G$.
If $G$ is a $d$-regular graph, by the Perron-Frobenius theorem (e.g. \cite[Chapter 31]{LW01}), the largest eigenvalue is $d$ and all eigenvalues of $G$ are in the interval $[-d, d]$. 
And $-d$ is an eigenvalue of $G$ if and only if $G$ is bipartite.
Now we are ready to explain expander graphs. 
A $d$-regular graph $G$ is called an {\it expander graph} if its second largest eigenvalue is smaller than the largest eigenvalue $d$ (see e.g. \cite{HLW06}).
Let $\lambda(G)$ be the largest absolute value of eigenvalues of $G$ other than $\pm d$. 
Then a $d$-regular graph $G$ is called a {\it Ramanujan graph} if $\lambda(G) \leq 2\sqrt{d-1}$ (see \cite{LPS88}).
By the Alon-Boppana bound (see e.g. \cite{LPS88}), Ramanujan graphs can be regarded as the best possible expander graphs. 
Expander graphs have wide applications (for details, see e.g. \cite{HLW06}) and explicit constructions of expander graphs 
are very important and interesting from the view of mathematics and applications.
In these three decades, many constructions are found in various areas of mathematics.
Especially, as pointed out by Chee-Ling~\cite{CL02}, expander graphs with symmetry are important in applications. 
So constructions based on Cayley graphs are preferable since Cayley graphs have vertex-transitivity.
For constructions based on Cayley graphs, see e.g. \cite{LZ18}.
\par We note that our expander Cayley graphs have the similar sparseness to those of the Chung's graphs~\cite{C89} and some generalized graphs in \cite{HL18}, \cite{LWWZ14}, \cite{RM17} and \cite{S93}. 
And it is worth noting that our construction of Ramanujan graphs from Galois rings of characteristic $4$ uses only elementary discussions and fundamental facts of Galois rings. 
We remark that known constructions of Ramanujan graphs are usually based on deep results or methods in number theory and related algebraic areas.
\par The rest of this paper is organized as follows. 
In Section~\ref{sec:Galois}, we give the definition of Galois rings and some known basic facts. 
Section~\ref{sec:Cayley}, \ref{sec:Const} and \ref{sec:spec} are the main part of this paper.
In Section~\ref{sec:Cayley}, we construct Cayley graphs from Galois rings and show that they are expander graphs by a Weil-Carlitz-Uchiyama type estimation for Galois rings (\cite{KHC95}).
We also discuss the girth for some cases.
In Section~\ref{sec:Const}, we focus on Cayley graphs from Galois rings of characteristic $4$ and show that they are Ramanujan graphs by an elementary character sum estimation based on ~\cite{BHK92}.
In Section~\ref{sec:spec}, we also prove some other interesting spectral property of our Ramanujan graphs in Section~\ref{sec:Const}.
At last, in Section~\ref{sec:concl}, we give a concluding remark.

\section{Galois rings}
\label{sec:Galois}
For details of Galois rings, see the textbook \cite[Chapter 14]{W12}. 
We will also refer some facts denoted in \cite[Section III, IV]{BHK92} and \cite[Section III]{HKCSS94}. 
\par Let $p$ be a prime and $e$ be a positive integer. 
Let $\mathbb{Z}_{p^e}$ be the residue ring $\mathbb{Z}/p^e\mathbb{Z}$ and $\mathbb{Z}_{p^e}[x]$ the polynomial ring over $\mathbb{Z}_{p^e}$.
Define $\rho: \mathbb{Z}_{p^e}\rightarrow \mathbb{Z}_{p}$ as the reduction map.
For a polynomial $f(x)=a_0+a_1x+\cdots+a_kx^k \in \mathbb{Z}_{p^e}[x]$, let $\rho(f(x)):=\rho(a_0)+\rho(a_1)x+\cdots+\rho(a_k)x^k \in \mathbb{Z}_{p}[x]$.
Then $f(x) \in \mathbb{Z}_{p^e}[x]$ is called {\it monic basic irreducible} if $\rho(f(x)) \in \mathbb{Z}_{p}[x]$ is a monic irreducible polynomial.
For each $r\geq 1$, the Galois ring of characteristic $p^e$ and order $p^{er}$ is defined as follows.

\begin{definition}
Let $f(x) \in \mathbb{Z}_{p^e}[x]$ be a monic basic irreducible polynomial of degree $r$.
Then the {\it Galois ring} $GR(p^{e}, p^{er})$ of {\it characteristic} $p^e$ and {\it order} $p^{er}$ is defined as follows:
\[
GR(p^{e}, p^{er}):=\mathbb{Z}_{p^e}[x]/(f(x))
\]
Here $(f(x))$ is the principle ideal of $\mathbb{Z}_{p^e}[x]$ generated by $f(x)$ and order and characteristic are defined in the same manner as the case of finite fields.
\end{definition} 
Let $GR^+(p^{e}, p^{er})$ and $GR^*(p^{e}, p^{er})$ be the additive and multiplicative group of $GR(p^{e}, p^{er})$, respectively.
The group structures of $GR^+(p^{e}, p^{er})$ and $GR^*(p^{e}, p^{er})$ were completely determined \cite[Section 14.1, 14.4]{W12}.
$GR(p^{e}, p^{er})$ is a free $\mathbb{Z}_{p^e}$-module of rank $r$.
And $GR^*(p^{e}, p^{er})$ contains the cyclic group, say $G_1$, of order $p^r-1$.
Let $\xi$ be a generator of $G_1$. Then the following theorem is known.
\begin{theorem}[Theorem 14.8 in \cite{W12}]
For the Galois ring $GR(p^e, p^{er})$, the following statements hold.
\begin{enumerate}
\item[$(1)$] Each element of $GR(p^{e}, p^{er})$ can be uniquely expressed by the form $a_0+a_1\xi+\cdots+a_{r-1}\xi^{r-1}$ 
where $a_0, a_1,\ldots, a_{r-1} \in \mathbb{Z}_{p^e}$.

\item[$(2)$] Each element of $GR(p^{e}, p^{er})$ can be uniquely expressed by the form $b_0+b_1p+\cdots+b_{e-1}p^{e-1}$ where $b_0, b_1,\ldots, b_{e-1} \in G_1 \cup \{0\}$.
Moreover, $b_0+b_1p+\cdots+b_{e-1}p^{e-1}$ is invertible if and only if $b_0 \neq 0$. 
\end{enumerate} 
\end{theorem}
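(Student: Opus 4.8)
The plan is to prove the two parts by largely independent arguments: part $(1)$ from the $\mathbb{Z}_{p^e}$-module structure of the ring together with a change-of-basis computation, and part $(2)$ from the theory of Teichm\"uller representatives together with a counting argument.

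For part $(1)$, I would first note that since $f(x)$ is monic of degree $r$, the division algorithm in $\mathbb{Z}_{p^e}[x]$ produces a unique remainder of degree less than $r$ modulo $f(x)$; writing $\theta$ for the residue class of $x$, this shows that every element of $GR(p^e,p^{er})$ is uniquely of the form $c_0+c_1\theta+\cdots+c_{r-1}\theta^{r-1}$ with $c_i\in\mathbb{Z}_{p^e}$, so $1,\theta,\dots,\theta^{r-1}$ is a $\mathbb{Z}_{p^e}$-basis. To replace $\theta$ by $\xi$, I would use the reduction homomorphism $GR(p^e,p^{er})\to GR(p^e,p^{er})/pGR(p^e,p^{er})\cong\mathbb{Z}_p[x]/(\rho(f(x)))=\mathbb{F}_{p^r}$. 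Its restriction to the cyclic group $G_1$ of order $p^r-1$ is a group homomorphism into $\mathbb{F}_{p^r}^\times$, so $\bar\xi$ is a multiplicative generator and in particular a field generator of $\mathbb{F}_{p^r}/\mathbb{F}_p$; hence both $\{\bar\theta^i\}$ and $\{\bar\xi^i\}$ are $\mathbb{F}_p$-bases. The matrix $M$ over $\mathbb{Z}_{p^e}$ expressing $1,\xi,\dots,\xi^{r-1}$ in the basis $1,\theta,\dots,\theta^{r-1}$ therefore reduces modulo $p$ to an invertible matrix, so $\det M\notin(p)$; since $(p)$ is the unique maximal ideal of $\mathbb{Z}_{p^e}$, $\det M$ is a unit and $M$ is invertible over $\mathbb{Z}_{p^e}$, giving the unique expression in $(1)$.

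For part $(2)$, the key object is the set $T:=G_1\cup\{0\}$. I would first show the reduction map restricts to a bijection $T\to\mathbb{F}_{p^r}$. This reduces to injectivity of $G_1\to\mathbb{F}_{p^r}^\times$: its kernel consists of $u\in G_1$ with $u\equiv1\pmod{pGR(p^e,p^{er})}$, and every such principal unit has order a power of $p$, which must also divide $|G_1|=p^r-1$, forcing $u=1$. (Alternatively one invokes Hensel's lemma for $X^{p^r}-X$, whose derivative $-1$ is a unit, to realize $T$ as the set of Teichm\"uller lifts.) Granting that $T$ is a complete residue system modulo $p$, existence of the expansion follows by digit extraction: pick $b_0\in T$ with $z\equiv b_0\pmod{p}$, write $z-b_0=pz_1$, and iterate; since $p^e=0$ in $GR(p^e,p^{er})$, the process terminates after $e$ steps and yields $z=b_0+b_1p+\cdots+b_{e-1}p^{e-1}$.

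Uniqueness and the invertibility criterion would then follow quickly. Because $|T|=p^r$ and there are $e$ digits, the digit map $T^{\,e}\to GR(p^e,p^{er})$ has domain of size $(p^r)^e=p^{er}=|GR(p^e,p^{er})|$; being surjective by the existence step, it is bijective, which is precisely uniqueness. Finally, reducing $z=b_0+b_1p+\cdots+b_{e-1}p^{e-1}$ modulo $p$ gives $\bar z=\bar b_0$, so $z\in pGR(p^e,p^{er})$ if and only if $b_0=0$; as the units of the local ring $GR(p^e,p^{er})$ are exactly the elements outside the maximal ideal $pGR(p^e,p^{er})$, $z$ is invertible if and only if $b_0\neq0$. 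I expect the main obstacle to be the injectivity of reduction on $G_1$ in part $(2)$ --- equivalently, that distinct roots of unity in $T$ stay distinct modulo $p$ --- since everything else is a combination of the division algorithm, a determinant-is-a-unit observation, and a clean counting step.
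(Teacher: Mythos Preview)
The paper does not supply its own proof of this theorem: it is quoted verbatim as ``Theorem~14.8 in \cite{W12}'' and used as a background fact, with no argument given in the paper. There is therefore nothing to compare your proposal against on the paper's side.

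Your argument is essentially correct and is in fact the standard textbook proof. One small logical wrinkle: in part~$(1)$ you assert that the reduction $\bar\xi$ is a multiplicative generator of $\mathbb{F}_{p^r}^\times$ merely from the fact that $G_1\to\mathbb{F}_{p^r}^\times$ is a group homomorphism, but this requires the injectivity you only establish later in part~$(2)$. Either reorder the argument (prove the injectivity of reduction on $G_1$ first, then use it in both parts), or in part~$(1)$ avoid the primitivity claim and argue directly that $1,\bar\xi,\dots,\bar\xi^{\,r-1}$ are $\mathbb{F}_p$-linearly independent because $\bar\xi$ generates $\mathbb{F}_{p^r}$ over $\mathbb{F}_p$; the latter still needs that $\bar\xi$ has order $p^r-1$, so the cleanest fix is the reordering. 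Everything else---the division algorithm for monic $f$, the determinant-is-a-unit step via reduction mod $p$, the Teichm\"uller digit extraction, the counting bijection, and the invertibility criterion via the local-ring description of units---is sound.
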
 
We can also define the trace function from $GR(p^{e}, p^{er})$ to $\mathbb{Z}_{p^e}$. 
Let $\phi: GR(p^{e}, p^{er}) \rightarrow GR(p^{e}, p^{er})$ be the map such that $a_0+a_1\xi+\cdots+a_{r-1}\xi^{r-1}$ is mapped to $a_0+a_1\xi^{p}+\cdots+a_{r-1}\xi^{p(r-1)}$.
The map $\phi$ is actually a ring automorphism of $GR(p^{e}, p^{er})$ and it is called the {\it generalized Frobenius automorphism}.
Then the {\it trace function} $T$ from $GR^+(p^{e}, p^{er})$ to $\mathbb{Z}_{p^e}$ is defined by $T(x)=x+\phi(x)+\cdots+\phi^{r-1}(x)$ for each $x \in GR(p^{e}, p^{er})$.
Actually $T$ is a surjective linear map. \\
　Let $\mu: GR(p^e, p^{er}) \rightarrow GR(p^e, p^{er})/pGR(p^e, p^{er})$ be the natural projection map which is a ring homomorphism. 
Since $pGR(p^e, p^{er})$ is the unique maximal ideal of $GR(p^e, p^{er})$, the residue ring $GR(p^e, p^{er})/pGR(p^e, p^{er})$ is a finite field isomorphic to $\mathbb{F}_{p^r}$.
And $\theta:=\mu(\xi)$ is a primitive element of $\mathbb{F}_{p^r}$.

\section{Cayley graphs from Galois rings}
\label{sec:Cayley}
In this section, we define the following Cayley graph over $GR^+(p^e, p^{er})$.
\begin{definition}
Let $e, r\geq 2$. 
For the prime $p=2$, we define the Cayley graph $H_{2^{e}, 2^{er}}:=Cay(GR^+(2^e, 2^{er}), G_1\cup -G_1)$.
And for odd primes $p \geq 3$, we also define the Cayley graph $H_{p^e, p^{er}}:=Cay(GR^+(p^e, p^{er}), G_1)$.
\end{definition}
\label{def:Cayley}
Note that for odd primes $p$, $H_{p^e, p^{er}}$ is well-defined since $G_1=-G_1$.
In fact, since $p$ is odd and $\xi^{p^r-1}=1$, we see that $(\xi^{(p^r-1)/2}-1)(\xi^{(p^r-1)/2}+1)=1$ 
and thus $-1=\xi^{(p^r-1)/2}$ because $\xi^{(p^r-1)/2}-1$ is invertible by the following lemma.

\begin{lemma}
\label{lem:unit}
Let $p$ be a prime and $e \geq 2$ an integer. Then for each $0\leq i \neq j \leq p^{r}-2$, $\xi^i-\xi^j$ is invertible.
\end{lemma}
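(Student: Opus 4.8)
The plan is to reduce the question of invertibility in $GR(p^e, p^{er})$ to a question of nonvanishing in its residue field $\mathbb{F}_{p^r}$. Recall from Section~\ref{sec:Galois} that $pGR(p^e, p^{er})$ is the \emph{unique} maximal ideal of $GR(p^e, p^{er})$; hence the non-units are exactly the elements of $pGR(p^e, p^{er})$, and an element is invertible if and only if it does not lie in this ideal. Since $pGR(p^e, p^{er})$ is precisely the kernel of the natural projection $\mu : GR(p^e, p^{er}) \to GR(p^e, p^{er})/pGR(p^e, p^{er}) \cong \mathbb{F}_{p^r}$, it follows that an element $z$ is a unit if and only if $\mu(z) \neq 0$. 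So the whole task reduces to showing $\mu(\xi^i - \xi^j) \neq 0$.

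First I would use that $\mu$ is a ring homomorphism, so $\mu(\xi^i - \xi^j) = \mu(\xi)^i - \mu(\xi)^j = \theta^i - \theta^j$, where $\theta = \mu(\xi)$. As recorded at the end of Section~\ref{sec:Galois}, $\theta$ is a primitive element of $\mathbb{F}_{p^r}$, i.e.\ it has multiplicative order $p^r - 1$. Consequently, for exponents in the range $0 \leq i \neq j \leq p^r - 2$ we have $\theta^i \neq \theta^j$, since $\theta^i = \theta^j$ would force $i \equiv j \pmod{p^r - 1}$, which is impossible for two distinct integers lying in an interval of length $p^r - 1$. Therefore $\theta^i - \theta^j \neq 0$, whence $\mu(\xi^i - \xi^j) \neq 0$ and $\xi^i - \xi^j$ is invertible.

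I do not expect a genuine obstacle here: the only substantive step is the observation that invertibility in the local ring $GR(p^e, p^{er})$ is detected by the single reduction $\mu$ modulo the maximal ideal, after which primitivity of $\theta$ closes the argument immediately. An alternative route would invoke part~$(2)$ of the structure theorem (Theorem 14.8 in \cite{W12}) directly, writing $\xi^i$ and $\xi^j$ as Teichm\"uller representatives $b_0 \in G_1 \cup \{0\}$ and examining the leading digit of their difference; but this requires extra bookkeeping with the $p$-adic-type expansion and is less transparent than simply passing to the residue field. I note also that the hypothesis $e \geq 2$ plays no essential role in this argument, the same reasoning covering $e = 1$ as well.
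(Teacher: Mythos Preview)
Your argument is correct and is essentially the same as the paper's: both detect invertibility via the reduction $\mu$ to the residue field and then invoke the primitivity of $\theta=\mu(\xi)$. The only cosmetic difference is that the paper first factors out $\xi^{j}$ to reduce to the case $1-\xi^{i}$ before passing to $\mathbb{F}_{p^{r}}$, whereas you apply $\mu$ directly to $\xi^{i}-\xi^{j}$; the substance is identical.
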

\begin{proof}
It suffices to prove that $1-\xi^i$ is invertible for every $1\leq i \leq p^{r}-2$.
Suppose that $1-\xi^i \in pGR(p^e, p^{er})$ for some $1\leq i \leq p^{r}-2$. 
Then $\theta^i=1$, which contradicts the fact that $\theta$ is a primitive element of $\mathbb{F}_{p^r}$.
\end{proof}
From Definition~\ref{def:Cayley}, we can easily get the following proposition.
\begin{proposition}
\label{prop:deg}
\begin{enumerate}
\item[$(1)$] $H_{2^e, 2^{er}}$ is a $(2^{r+1}-2)$-regular graph with $2^{er}$ vertices. 
\item[$(2)$] $H_{p^e, p^{er}}$ is a $(p^{r}-1)$-regular graph with $p^{er}$ vertices.
\end{enumerate}
\end{proposition}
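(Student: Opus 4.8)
The plan is to use the two standard facts about a Cayley graph $Cay(\Gamma, S)$ with $S=-S$ and $0\notin S$: its number of vertices equals $|\Gamma|$, and it is $|S|$-regular. Thus the entire proof reduces to computing $|GR^+(p^e,p^{er})|$ together with the size of the relevant connection set. The vertex count is immediate in both cases: since $GR(p^e,p^{er})$ is a free $\mathbb{Z}_{p^e}$-module of rank $r$, it has exactly $(p^e)^r=p^{er}$ elements, so each graph has $p^{er}$ vertices.

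For the degree in part $(2)$, where $p$ is odd and $S=G_1$, I would simply note that $|G_1|=p^r-1$ by definition and that $G_1$ is a legitimate connection set: $0\notin G_1$ because every element of $G_1$ is a unit, and $G_1=-G_1$ as already verified above (using $-1=\xi^{(p^r-1)/2}\in G_1$, which rests on Lemma~\ref{lem:unit}). Hence the graph is $(p^r-1)$-regular.

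For the degree in part $(1)$, where $p=2$ and $S=G_1\cup -G_1$, I would first check that $S$ is a valid connection set: $0\notin S$ since the elements of $G_1$ (and hence of $-G_1$) are units, and $S=-S$ is clear. The crux is to count $|G_1\cup -G_1|$, for which I would show that the union is disjoint. The key observation is that $-1\notin G_1$: since the characteristic $2^e\geq 4$ we have $-1\neq 1$, whereas every $x\in G_1$ satisfies $x^{2^r-1}=1$ and $(-1)^{2^r-1}=-1\neq 1$ because $2^r-1$ is odd. Consequently, if $\xi^i=-\xi^j$ for some $i,j$, then $\xi^{i-j}=-1\in G_1$, a contradiction; so $G_1\cap(-G_1)=\emptyset$ and $|G_1\cup -G_1|=2(2^r-1)=2^{r+1}-2$, giving the claimed regularity.

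The argument is entirely elementary, so I expect no serious obstacle. The one step that genuinely requires care is the disjointness $G_1\cap(-G_1)=\emptyset$ in the even-characteristic case, which hinges precisely on the fact that $-1\notin G_1$; everything else is a direct application of the definition of a Cayley graph and the $\mathbb{Z}_{p^e}$-module structure of the Galois ring.
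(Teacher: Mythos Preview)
Your proof is correct and follows essentially the same approach as the paper's: reduce to computing $|S|$, and for part~(1) show $G_1\cap(-G_1)=\emptyset$ by arguing that $-1\notin G_1$. The only cosmetic difference is that the paper phrases the last step as ``$-1$ would generate a subgroup of order $2$ in a group of odd order,'' whereas you use the equivalent observation $(-1)^{2^r-1}=-1\neq 1$.
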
 
\begin{proof}
Since $(2)$ is obvious from the definition, we prove $(1)$. 
Now we know the size of $H_{2^e, 2^{er}}$. The degree is equal to the size of $G_1\cup -G_1$.
Since $|G_1|=2^r-1$, we shall prove that $G_1$ and $-G_1$ are disjoint.
If $\xi^k=-\xi^{l}$ for some $0 \leq k, l \leq 2^r-2$, then $\xi^{k-l}=-1$.
When $k=l$, this contradicts the fact that the characteristic of $GR(2^{e}, 2^{er})$ is $2^e>2$.
When $k \neq l$, we see that $-1 \in G_1$ and it generates the cyclic subgroup of order $2$, which contradicts the fact that the order of $G_1$ is odd. \\
\end{proof}
Now we get the following theorem.
\begin{theorem}
\label{thm:H1}
\begin{enumerate}
\item[$(1)$]
All eigenvalues of the graph $H_{2^e, 2^{er}}$ except for the degree $2^{r+1}-2$ are in the interval
$[-2^{e+\frac{r}{2}}+2^{\frac{r}{2}+1}-2,\; 2^{e+\frac{r}{2}}-2^{\frac{r}{2}+1}+2]$.

\item[$(2)$]
For every odd prime $p\geq 3$, all eigenvalues of the graph $H_{p^e, p^{er}}$ except for the degree $p^{r}-1$ are in the interval
$[-p^{e+\frac{r}{2}-1}+p^{\frac{r}{2}}-1,\; p^{e+\frac{r}{2}-1}-p^{\frac{r}{2}}+1]$.
\end{enumerate}
\end{theorem}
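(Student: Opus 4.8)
The plan is to diagonalize the adjacency matrix through the additive characters of $GR^+(p^e,p^{er})$ and then to control the resulting character sums by the Weil-Carlitz-Uchiyama estimate of \cite{KHC95}. Fix the canonical additive character $\psi\colon\mathbb{Z}_{p^e}\to\mathbb{C}^\times$, $\psi(y)=\exp(2\pi i y/p^e)$. Since the trace form $(a,x)\mapsto T(ax)$ is a nondegenerate symmetric $\mathbb{Z}_{p^e}$-bilinear form on the free module $GR^+(p^e,p^{er})$, the maps $\chi_a(x):=\psi(T(ax))$, $a\in GR(p^e,p^{er})$, exhaust the additive characters of $GR^+(p^e,p^{er})$. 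Because this group is abelian and the connection set $S$ satisfies $S=-S$, the eigenvalues of $Cay(GR^+(p^e,p^{er}),S)$, with multiplicity, are exactly the real numbers $\lambda_a=\sum_{s\in S}\chi_a(s)$, and $\lambda_0=|S|$ is the degree. As $\mathcal{T}:=G_1\cup\{0\}$ contains the $\mathbb{Z}_{p^e}$-basis $1,\xi,\dots,\xi^{r-1}$, nondegeneracy of $T$ shows that $\lambda_a=|S|$ forces $a=0$; hence it suffices to bound $|\lambda_a|$ for $a\neq 0$.

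Next I would reduce every $\lambda_a$ to a single exponential sum over the Teichmüller set $\mathcal{T}$. Writing $S(a):=\sum_{t\in\mathcal{T}}\psi(T(at))$, the equality $G_1=\mathcal{T}\setminus\{0\}$ gives $\lambda_a=S(a)-1$ in the odd case, while for $p=2$ the identity $\psi(T(-at))=\overline{\psi(T(at))}$ gives $\lambda_a=2\,\mathrm{Re}\,S(a)-2$. Consequently $|\lambda_a|\le |S(a)|+1$ for odd $p$, and $|\lambda_a|\le 2|S(a)|+2$ for $p=2$, so the whole problem is reduced to estimating $|S(a)|$.

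The heart of the matter is the estimate
\[
|S(a)|\le (p^{e-1}-1)\,p^{r/2}\qquad (a\neq 0).
\]
For a unit $a$ this is precisely the Weil-Carlitz-Uchiyama bound of \cite{KHC95}: unfolding the $p$-adic structure of $\mathcal{T}$ rewrites the linear trace sum $S(a)$ as a classical exponential sum over $\mathbb{F}_{p^r}$ whose effective degree is $p^{e-1}$, and Weil's theorem then contributes the factor $(p^{e-1}-1)p^{r/2}$. For a nonunit $a\neq 0$, write $a=p^m u$ with $u$ a unit and $1\le m\le e-1$; since reduction modulo $p^{e-m}$ carries $\mathcal{T}$ bijectively onto the Teichmüller set of $GR(p^{e-m},p^{(e-m)r})$ and commutes with the trace, $S(a)$ equals the analogous sum for that smaller ring and is therefore bounded by $(p^{e-m-1}-1)p^{r/2}\le(p^{e-1}-1)p^{r/2}$. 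Substituting the estimate into the two inequalities above and simplifying $(p^{e-1}-1)p^{r/2}+1=p^{e+\frac{r}{2}-1}-p^{\frac{r}{2}}+1$ for odd $p$, and $2(2^{e-1}-1)2^{r/2}+2=2^{e+\frac{r}{2}}-2^{\frac{r}{2}+1}+2$ for $p=2$, yields exactly the two stated intervals.

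The main obstacle is the displayed estimate for units $a$, namely securing the Weil-Carlitz-Uchiyama bound over the Galois ring: one must carry out the unfolding of the Teichmüller character sum into an exponential sum over $\mathbb{F}_{p^r}$ and correctly identify its effective degree as $p^{e-1}$. As a consistency check, at $e=1$ the bound collapses to $0$, recovering the orthogonality relation $\sum_{x\in\mathbb{F}_{p^r}}\psi(\mathrm{Tr}(ax))=0$ for $a\neq 0$; and the fact that each bound is strictly below the degree shows in passing that the graphs are connected and non-bipartite. Everything else---the character-theoretic diagonalization, the odd/even reduction, and the nonunit case---is routine bookkeeping.
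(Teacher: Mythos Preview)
Your proof is correct and follows the same route as the paper: both diagonalize via the additive characters and then invoke the Kumar--Helleseth--Calderbank estimate from \cite{KHC95} (the paper's Lemma~\ref{lem:KHC}) as a black box to bound the resulting sums. Your presentation is more explicit---parametrizing characters through the trace pairing, summing over the full Teichm\"uller set $\mathcal{T}$, and handling nonunit $a$ by reduction to a smaller Galois ring---but the paper's lemma already absorbs the unit/nonunit distinction into the parameter $N_\alpha$, so the substance is identical. One small quibble: your closing aside that the bound is ``strictly below the degree'' is not true for all $e,r\ge 2$ (take $p$ odd, $r=2$, $e=3$), which is exactly why the paper asserts connectedness only under the extra hypothesis $e<r/2+1$; this does not affect the proof of the stated eigenvalue interval.
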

This theorem can be proved by the following result which is an analogue of a Weil-Carlitz-Uchiyama type estimation of character sums over finite fields.
The following estimation is a special case of the result proved by Kumar-Helleseth-Calderbank~\cite{KHC95}.
\begin{lemma}[Kumar-Helleseth-Calderbank~\cite{KHC95}]
\label{lem:KHC}
For $\alpha=b_0+b_1p+\cdots+b_{e-1}p^{e-1}$ where $b_0, b_1,\ldots, b_{e-1} \in G_1 \cup \{0\}$, 
let $i_\alpha:=\min\{i \mid b_i \neq 0, \; 0\leq i \leq e-1\}$ and $N_{\alpha}:=p^{e-1-i_{\alpha}}$.
Then, for every non-trivial character $\psi$ of $GR^+(p^e, p^{er})$ and $\alpha\neq0$,
\begin{equation}
\biggl|\sum_{s\in G_1}\psi(\alpha s) \biggr| \leq (N_{\alpha}-1)\sqrt{p^r}+1.
\end{equation} 
Especially, $\max_{\alpha \neq 0}N_\alpha=p^{e-1}$.
\end{lemma}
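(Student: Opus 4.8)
The plan is to deduce the bound from the classical Weil--Carlitz--Uchiyama estimate over the residue field $\mathbb{F}_{p^r}$, which is what makes the statement of ``Weil--Carlitz--Uchiyama type''. I take the additive character in the canonical normalisation $\psi(y)=\omega^{T(y)}$ with $\omega=e^{2\pi i/p^e}$ and $T$ the trace of Section~\ref{sec:Galois}; an arbitrary nontrivial character is $y\mapsto\omega^{T(\gamma y)}$, and for a unit $\gamma$ this merely rescales $\alpha$ to $\gamma\alpha$ without changing its valuation, so the canonical character is the essential case. Two elementary reductions come first. Writing $\mathcal{T}:=G_1\cup\{0\}$ for the Teichm\"uller set, I restore the missing term $s=0$,
\[
\sum_{s\in G_1}\psi(\alpha s)=\widetilde{S}-1,\qquad \widetilde{S}:=\sum_{s\in\mathcal{T}}\psi(\alpha s),
\]
so it suffices to prove the sharper inequality $|\widetilde{S}|\le(N_\alpha-1)\sqrt{p^r}$, the triangle inequality then supplying the extra $+1$. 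Next I factor $\alpha=p^{i_\alpha}\alpha'$ with $\alpha'$ a unit (its lowest Teichm\"uller digit is $b_{i_\alpha}\ne0$). Since $T$ is $\mathbb{Z}_{p^e}$-linear, $\psi(\alpha s)=(\omega^{p^{i_\alpha}})^{T(\alpha' s)}$, where $\omega^{p^{i_\alpha}}$ is a primitive $p^{\,e-i_\alpha}$-th root of unity and only $T(\alpha's)\bmod p^{\,e-i_\alpha}$ survives; this identifies $\widetilde{S}$ with the analogous sum over the smaller Galois ring $GR(p^{\,e-i_\alpha},p^{(e-i_\alpha)r})$ attached to the unit $\alpha'$. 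Hence it is enough to treat the unit case $i_\alpha=0$, where the target reads $|\widetilde{S}|\le(p^{e-1}-1)\sqrt{p^r}$, and the general value $N_\alpha=p^{\,e-1-i_\alpha}$ appears automatically from this descent.

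For the unit case I would transport the sum to $\mathbb{F}_{p^r}$ through the Teichm\"uller lift $\tau\colon\mathbb{F}_{p^r}\to\mathcal{T}$, the inverse of the reduction $\mu$ restricted to $\mathcal{T}$; as $x$ runs over $\mathbb{F}_{p^r}$ the element $s=\tau(x)$ runs over $\mathcal{T}$, so $\widetilde{S}=\sum_{x\in\mathbb{F}_{p^r}}\omega^{T(\alpha\tau(x))}$. The arithmetic input is that the Frobenius $\phi$ fixes $\mathbb{Z}_{p^e}$ and satisfies $\phi(\tau(x))=\tau(x^p)$, so that $T(\tau(w))=\sum_{k=0}^{r-1}\tau(w^{p^k})$ for every $w$. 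Expanding $\alpha\tau(x)$ in its Teichm\"uller digits and then evaluating the sums of Teichm\"uller representatives occurring in $T(\alpha\tau(x))$ by the Witt-vector addition formulae, one extracts the $p$-adic digits of $T(\alpha\tau(x))\in\mathbb{Z}_{p^e}$ as explicit polynomial functions of $x$ over $\mathbb{F}_p$, with degrees bounded by the successive powers $1,p,\dots,p^{e-1}$. Consequently $\widetilde{S}$ is an exponential sum over $\mathbb{F}_{p^r}$ of Artin--Schreier--Witt type, governed by a Weil--Deligne bound whose leading coefficient is one less than the top degree; when $e=2$ it collapses literally to a single-character sum $\sum_{x\in\mathbb{F}_{p^r}}\chi(F(x))$ with $\deg F\le p$, to which Carlitz--Uchiyama applies directly. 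Either way this yields $|\widetilde{S}|\le(p^{e-1}-1)\sqrt{p^r}$.

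The crux, and the step I expect to fight with, is the degree and non-degeneracy control feeding into that Weil bound: I must show the Witt-vector expansion produces a datum of top degree at most $p^{e-1}$ whose leading term does \emph{not} vanish, since a polynomial of the special shape $g^p-g+c$ exhibits no cancellation and would cost the factor $p^{e-1}-1$. This is exactly where $b_{i_\alpha}\ne0$ is used: after the descent the lowest Teichm\"uller digit of $\alpha'$ is a unit, and tracking it through the highest Witt component shows the coefficient of the degree-$p^{e-1}$ term is a nonzero multiple of that digit, so the Weil estimate applies with the full factor. The base case $e=1$ both normalises and reassures: there $\mathcal{T}=\mathbb{F}_{p^r}$ and $\widetilde{S}=\sum_{x\in\mathbb{F}_{p^r}}\omega^{\mathrm{tr}(\alpha x)}=0$ for $\alpha\ne0$, matching $(p^{0}-1)\sqrt{p^r}=0$, whence $\sum_{s\in G_1}\psi(\alpha s)=-1$ has absolute value $1=(N_\alpha-1)\sqrt{p^r}+1$. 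Finally the closing assertion $\max_{\alpha\ne0}N_\alpha=p^{e-1}$ is immediate, being attained precisely when $i_\alpha=0$, that is when $\alpha$ is a unit.
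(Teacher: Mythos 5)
You should note at the outset that the paper contains no proof of Lemma~\ref{lem:KHC} to compare against: it is imported verbatim as a special case of the main theorem of \cite{KHC95}, whose proof associates the character sum to an Artin--Schreier--Witt covering of the line over $\mathbb{F}_{p^r}$ and applies Weil's Riemann hypothesis for curves, the genus/conductor computation being what produces the factor $N_\alpha-1$. Measured against that, your preliminary reductions are correct and genuinely useful: restoring the term $s=0$ so that the target becomes $|\widetilde S|\le (N_\alpha-1)\sqrt{p^r}$ over the full Teichm\"uller set $G_1\cup\{0\}$, and the descent $\alpha=p^{i_\alpha}\alpha'$ with $\alpha'$ a unit, which (since the trace and the Teichm\"uller set are compatible with reduction modulo $p^{e-i_\alpha}$) identifies $\widetilde S$ with the unit-case sum over $GR(p^{e-i_\alpha},p^{(e-i_\alpha)r})$ and correctly explains the exponent in $N_\alpha$. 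One caveat: your reduction over characters silently treats only $\psi=\omega^{T(\gamma\,\cdot\,)}$ with $\gamma$ a unit. For non-unit $\gamma$ with $\gamma\alpha\ne 0$ the bound still follows because $N_{\gamma\alpha}\le N_\alpha$, but when $\gamma\alpha=0$ (e.g.\ $e=2$, $\gamma=p$, $\alpha=p$) the sum equals $p^r-1$ and the stated inequality actually fails; this is a defect in the lemma's phrasing (the paper only ever applies it with the canonical character and varying $\alpha$), but a proof that quantifies over all nontrivial $\psi$ has to flag it rather than restrict to units without comment.

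The genuine gap is that after these normalisations nothing of substance has been proven: the unit case is the entire analytic content, and you dispose of it by invoking ``a Weil--Deligne bound'' for the Witt-vector datum ``whose leading coefficient is one less than the top degree,'' together with a non-degeneracy verification you explicitly concede you have not carried out. That invoked bound --- $(D-1)\sqrt{p^r}$ for a nondegenerate datum of weighted degree $D=p^{e-1}$ --- \emph{is} the Kumar--Helleseth--Calderbank theorem; appealing to it makes the plan circular, and the deferred degree/non-vanishing bookkeeping is precisely where all the work of \cite{KHC95} lies. Worse, your proposed fallback for $e=2$ is false: because of the Witt carries, $\sum_{x\in\mathbb{F}_{p^r}}\omega^{T(\alpha\tau(x))}$ is a level-$p^2$ sum and is \emph{not} of the form $\sum_{x}\chi(F(x))$ for an order-$p$ additive character $\chi$ and a polynomial $F$. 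Already for $p^e=4$, $r=1$, $\alpha=1$ the sum equals $1+i$, which is not even real, whereas every Carlitz--Uchiyama sum over $\mathbb{F}_2$ is an integer; handling exactly this obstruction is what the Galois-ring Weil bound exists for, and it is also why the paper's self-contained treatment of $p^e=4$ (Lemma~\ref{lem:zeta}, following \cite{BHK92}) proceeds by a coset-decomposition and counting argument rather than via Carlitz--Uchiyama. In short: correct normalisations, but the core estimate is assumed rather than proven, and the one concrete shortcut you offer does not work.
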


\begin{proof}[Proof of Theorem~\ref{thm:H1}]
It is known that if $\Gamma$ is an abelian group, the following set is the multi-set of $|\Gamma|$ eigenvalues of $Cay(\Gamma, S)$: 
\[
\Bigl\{ \sum_{s\in S}\psi(s) \mid \text{$\psi$ is a character of $\Gamma$} \Bigr\}.
\]
Thus each eigenvalue of $H_{2^e, 2^{er}}$ is expressed by $\sum_{s\in G_1 \cup -G_1}\psi(s)$ for a character $\psi$ of $GR^+(2^e, 2^{er})$.
Similarly, for $p\geq 3$, each eigenvalue of $H_{p^e, p^{er}}$ is expressed by $\sum_{s\in G_1}\psi(s)$ for a character $\psi$ of $GR^+(p^e, p^{er})$.
$(2)$ is a direct consequence of Lemma~\ref{lem:KHC}.
And since $G_1$ and $-G_1$ are disjoint when $p=2$, $(1)$ is also proved by Lemma~\ref{lem:KHC}.
\end{proof}

By Theorem~\ref{thm:H1}, we get the following condition for connectivity.

\begin{corollary}
For each $p$, $H_{2^e, 2^{er}}$ and $H_{p^e, p^{er}}$ are connected if $e<r/2+1$.
\end{corollary}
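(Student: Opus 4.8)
The plan is to invoke the standard spectral criterion for connectivity of regular graphs: for a $d$-regular graph the multiplicity of $d$ as an eigenvalue equals the number of connected components, so the graph is connected if and only if $d$ is a simple eigenvalue, equivalently if and only if every eigenvalue other than $d$ is strictly smaller than $d$ (see the Perron--Frobenius discussion in the introduction). Since Theorem~\ref{thm:H1} already confines every eigenvalue apart from the degree to an explicit interval, and the adjacency matrix is real symmetric so all eigenvalues are real, it suffices to check that the right endpoint of that interval lies strictly below the degree recorded in Proposition~\ref{prop:deg}.

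Concretely, for $H_{2^e,2^{er}}$ I would verify
\[
2^{e+\frac{r}{2}}-2^{\frac{r}{2}+1}+2 < 2^{r+1}-2,
\]
and for $H_{p^e,p^{er}}$ with odd $p$ the inequality
\[
p^{e+\frac{r}{2}-1}-p^{\frac{r}{2}}+1 < p^{r}-1 .
\]
In each case I would isolate the dominant term: the first reduces to $2^{e+r/2}<2^{r+1}+2^{r/2+1}-4$ and the second to $p^{e+r/2-1}<p^{r}+p^{r/2}-2$. The hypothesis $e<r/2+1$ is exactly the statement that the leading exponents satisfy $e+r/2<r+1$ and $e+r/2-1<r$, which forces $2^{e+r/2}<2^{r+1}$ and $p^{e+r/2-1}<p^{r}$ respectively. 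The remaining lower-order corrections are harmless, since $2^{r/2+1}\geq 4$ and $p^{r/2}\geq 2$ for $r\geq 2$ (with $p\geq 3$ making the latter strict), so that $2^{r+1}+2^{r/2+1}-4\geq 2^{r+1}$ and $p^{r}+p^{r/2}-2\geq p^{r}$; combining with the strict comparison of leading powers closes each inequality.

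There is essentially no serious obstacle here; the corollary is routine once Theorem~\ref{thm:H1} is in hand, and the only point requiring a little care is the bookkeeping between the strict comparison of leading powers and the additive correction terms. Having shown that the right endpoint of the interval lies strictly below the degree, no eigenvalue other than $d$ can equal $d$, whence $d$ is simple and the graph is connected.
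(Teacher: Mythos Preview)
Your argument is correct and reaches the same numerical inequality as the paper, but the route is slightly different. The paper does not invoke the Perron--Frobenius multiplicity criterion directly; instead it quotes Chung's diameter bound
\[
\text{diam}(G)\le \frac{\log(n-1)}{\log(d/\lambda)},
\]
observes that a finite diameter forces connectedness, and then notes that the right-hand side is finite as soon as $\lambda<d$, which by Theorem~\ref{thm:H1} is guaranteed when $e<r/2+1$. Your approach is more elementary: you bypass the diameter estimate and use only the fact that a $d$-regular graph is connected exactly when $d$ is a simple eigenvalue, then check the same inequality $\lambda<d$ explicitly via the leading-exponent comparison. What the paper's route buys is a quantitative byproduct (an explicit diameter bound), while your route avoids importing an external theorem and keeps the argument self-contained within the spectral facts already established in the paper. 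Both are valid; the underlying content---that the upper endpoint of the interval in Theorem~\ref{thm:H1} lies strictly below the degree precisely when $e<r/2+1$---is identical.
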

\begin{proof}
The diameter of a graph $G$ is the maximal length of the shortest paths in $G$.
If $G$ is disconnected, the diameter is defined as $\infty$.
Then it is not so difficult to show that $G$ is connected if its diameter is finite.
For a $d$-regular graph of $G$ on $n$ vertices whose eigenvalues other than $d$ are in the interval $[-\lambda, \lambda]$, Chung~\cite{C89} showed that 
\begin{equation}
(\text{the diameter of $G$}) \leq \frac{\log (n-1)}{\log(\frac{d}{\lambda})}.
\end{equation}
This bound implies that $G$ is connected if $\lambda<d$. By Theorem~\ref{thm:H1}, this is ensured when $e<r/2+1$ for both graphs.
\end{proof}

By Theorem~\ref{thm:H1}, we get many expander graphs which have the sparseness similar to those of the Chung's graphs~\cite{C89} and its generalizations in \cite{HL18}, \cite{LWWZ14}, \cite{RM17} and \cite{S93}.
For example, for each fixed $e$ and $p$, they form the infinite families $\{H_{2^e, 2^{er}}\}_{r\geq 1}$ and $\{H_{p^e, p^{er}}\}_{r\geq 1}$ of expander graphs whose degrees are the $1/e$-th power of the sizes. 
Moreover the following corollary gives sparser expander graphs.
Constructions of graphs which have the similar sparseness are also considered in Alon-Roichman~\cite{AR94}.
\begin{corollary}
\label{prop:H2}
For each rational number $0<\delta\leq 1/2$ and for all $r\geq 1$ such that $\delta r$ is an integer, the following statements hold.
\begin{enumerate}
\item[$(1)$]
The graph $H_{2^{\delta r}, 2^{\delta r^2}}$ is a connected $(2^{r+1}-2)$-regular graph with $2^{\delta r^2}$ vertices such that $\lambda(H_{2^{\delta r}, 2^{\delta r^2}}) \leq 2^{(1/2+\delta)r}-2^{\frac{r}{2}+1}+2$.

\item[$(2)$]
For every odd prime $p$, the graph $H_{p^{\delta r}, p^{\delta r^2}}$ is a connected $(p^{r}-1)$-regular graph with $p^{\delta r^2}$ vertices such that $\lambda(H_{p^{\delta r}, p^{\delta r^2}}) \leq p^{(1/2+\delta)r-1}-p^{\frac{r}{2}}+1$.
\end{enumerate}
\end{corollary}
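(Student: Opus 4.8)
The plan is to obtain this corollary as a direct specialization of Theorem~\ref{thm:H1} and Proposition~\ref{prop:deg}, taking the characteristic exponent to be $e=\delta r$ while keeping the rank of the Galois ring equal to $r$. With these choices the defining Galois ring becomes $GR(p^{\delta r}, p^{\delta r\cdot r})=GR(p^{\delta r}, p^{\delta r^2})$, so that $H_{p^{\delta r}, p^{\delta r^2}}$ is precisely $H_{p^{e}, p^{er}}$ for $e=\delta r$; note that the hypothesis that $\delta r$ be an integer is exactly what makes $e$ a legitimate characteristic exponent. The first step is simply to record this identification and to read off the degree and order from Proposition~\ref{prop:deg}: the rank being $r$ gives degree $2^{r+1}-2$ (resp. $p^{r}-1$), and the order of the ring is $p^{er}=p^{\delta r^2}$, which is the claimed number of vertices.

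Next I would substitute $e=\delta r$ into the eigenvalue intervals of Theorem~\ref{thm:H1}. In the odd case the upper endpoint $p^{e+\frac{r}{2}-1}-p^{\frac{r}{2}}+1$ becomes $p^{(\frac12+\delta)r-1}-p^{\frac{r}{2}}+1$, and since the interval is symmetric about $0$ this value bounds $\lambda(H_{p^{\delta r}, p^{\delta r^2}})$ from above. The case $p=2$ is identical: the endpoint $2^{e+\frac{r}{2}}-2^{\frac{r}{2}+1}+2$ becomes $2^{(\frac12+\delta)r}-2^{\frac{r}{2}+1}+2$. Here one should check that $\lambda$, the largest absolute value among eigenvalues other than $\pm d$, is indeed bounded by the stated endpoint; this is immediate because the Theorem~\ref{thm:H1} interval has the symmetric form $[-M,M]$ with $M$ equal to the stated endpoint, and the inequality $e<\frac{r}{2}+1$ verified below forces $M<d$, so that $[-M,M]$ contains no point equal to $\pm d$ and its whole content consists of eigenvalues different from $\pm d$.

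It remains to justify the word \emph{connected}. For this I would invoke the connectivity criterion established just above, which guarantees connectivity once $e<\frac{r}{2}+1$. Substituting $e=\delta r$ this becomes $\delta r<\frac{r}{2}+1$, i.e. $\delta<\frac12+\frac1r$, which holds for every $r\geq1$ since $\delta\leq\frac12$. The main, and essentially only, obstacle is bookkeeping rather than mathematics: one must confirm that the parameter choice $e=\delta r$ keeps the construction within the regime where the preceding results apply, and verify that the connectivity inequality holds uniformly in $r$; both are settled at once by the constraint $0<\delta\leq\frac12$, which yields $e=\delta r\leq\frac{r}{2}<\frac{r}{2}+1$. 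No new character-sum estimate is needed beyond those already in hand.
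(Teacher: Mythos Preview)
Your proposal is correct and matches the paper's intended argument: the corollary is stated without proof, being an immediate specialization of Proposition~\ref{prop:deg}, Theorem~\ref{thm:H1}, and the connectivity criterion with $e=\delta r$, exactly as you describe. Your check that $\delta\le\tfrac12$ forces $e=\delta r<\tfrac{r}{2}+1$ (hence connectivity and $M<d$) is the only point requiring verification, and you handle it cleanly.
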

Moreover, for the graph $H_{2^e, 2^{er}}$, we get the following theorem on {\it girth} of the graphs which is defined as the length of the shortest cycle.

\begin{theorem}
\label{thm:girth}
If $r$ is odd and $e\geq 2$, the graph $H_{2^e, 2^{er}}$ is triangle-free, that is, the girth of $H_{2^e, 2^{er}}$ is more than $4$.
Moreover if $r$ is odd and $e=2$, the girth of $H_{4, 4^{r}}$ is $4$. 
\end{theorem}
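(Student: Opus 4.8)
The plan is to translate the existence of a triangle into an additive relation among Teichm\"uller representatives and then eliminate it with the generalized Frobenius automorphism $\phi$. Since the connection set is $S=G_1\cup-G_1$, a triangle on vertices $0,a,b$ amounts to $a,b,a-b\in S$; writing $a=\epsilon_1\xi^{i}$, $b=\epsilon_2\xi^{j}$ and $a-b=\epsilon_3\xi^{k}$ with signs $\epsilon_\ell\in\{1,-1\}$ and dividing through by the unit $\xi^{k}$, I would set $s=\xi^{i-k}$ and $t=\xi^{j-k}$ in $G_1$ and obtain a relation of the shape $\pm s\pm t=\pm1$. Up to multiplying by $-1$ and swapping $s$ and $t$, this leaves exactly three essential cases: (A) $s+t=1$; (B) $s+t=-1$; and (C) $s-t=1$. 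Thus triangle-freeness reduces to showing that none of these is solvable with $s,t\in G_1$. Note that distinctness of the three vertices is automatic, since $a,b,a-b$ all lie in $S\subset GR^+(2^e,2^{er})\setminus\{0\}$.

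The main tool is that $\phi$ restricts to the squaring map on $G_1$ (because $\phi(\xi)=\xi^{2}$) while fixing $\pm1\in\mathbb{Z}_{2^e}$. I would apply $\phi$ to each relation and, separately, square it, and then compare; the point is that this pins down the cross term $2st$. In case (A), applying $\phi$ gives $s^{2}+t^{2}=1$, while squaring gives $s^{2}+t^{2}+2st=1$, so $2st=0$; since $st\in G_1$ is a unit and $2\neq0$ in $GR(2^e,2^{er})$ for $e\geq2$, this is a contradiction. Case (C) runs in the same way: comparing $s^{2}-t^{2}=1$ with $(s-t)^{2}=1$ yields $2t(s-t)=2t=0$, again impossible. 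Neither case uses the parity of $r$.

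The one case that genuinely uses odd $r$ is (B), and I expect it to be the main obstacle. Here $\phi$ gives $s^{2}+t^{2}=-1$ and squaring gives $s^{2}+t^{2}+2st=1$, so $2(st-1)=0$, whence $st-1\in 2^{e-1}GR(2^e,2^{er})\subseteq\ker\mu$. Since the restriction of $\mu$ to $G_1$ is injective (as $\theta=\mu(\xi)$ is primitive in $\mathbb{F}_{2^r}$), this forces $st=1$, so $t=s^{-1}$ and $s+s^{-1}=-1$, i.e.\ $s^{2}+s+1=0$. Then $s$ is a root of $x^{3}-1$ different from $1$ (note $s=1$ gives $3\neq0$), so its order is $3$ and hence $3\mid\lvert G_1\rvert=2^{r}-1$. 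But $2^{r}\equiv2\pmod 3$ for odd $r$, so $3\nmid 2^{r}-1$ and no such $s$ exists. This completes the proof that $H_{2^e,2^{er}}$ is triangle-free, i.e.\ has girth at least $4$. The hypothesis is sharp: for even $r$ a primitive cube root of unity lies in $G_1$ and, via $s+s^{-1}=-1$, produces an actual triangle.

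Finally, for $H_{4,4^{r}}$ with $r$ odd it remains only to exhibit one $4$-cycle, which together with triangle-freeness forces the girth to equal $4$. I would take the four vertices $0,\,1,\,1+\xi,\,\xi$: the consecutive differences are $1,\xi,1,\xi\in S$, and the four vertices are distinct because $\xi\neq0,1$ and $1+\xi\neq0$ (the latter since $-1\notin G_1$, as shown in the proof of Proposition~\ref{prop:deg}). Hence this is a genuine cycle of length $4$, giving girth at most $4$ and therefore girth exactly $4$; the same $4$-cycle in fact works for every $e\geq2$.
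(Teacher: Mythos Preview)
Your argument is correct and follows essentially the same route as the paper: reduce a triangle to a signed relation among Teichm\"uller units, then compare the square of the relation with its image under the Frobenius $\phi$ to force a contradiction via $2\cdot(\text{unit})=0$. Your case split $(\mathrm{A})$/$(\mathrm{C})$ matches the paper's Case~1, and your case $(\mathrm{B})$ corresponds to the paper's Cases~2--3. Two small differences are worth recording. In case $(\mathrm{B})$ the paper subdivides according to whether $a=2b$ and then, in the residual subcase, reduces modulo $2$ and uses the field trace to rule out roots of $x^{2}+x+1$ in $\mathbb{F}_{2^r}$; you instead deduce $st=1$ in $G_1$ from injectivity of $\mu|_{G_1}$ and finish with a clean order argument ($3\nmid 2^{r}-1$ for odd $r$), which is slightly more direct and avoids the extra subcase. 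For the second assertion the paper appeals to Theorem~4.2 of \cite{LW01} to guarantee a $4$-cycle, whereas you exhibit one explicitly ($0,1,1+\xi,\xi$); your approach is more self-contained and in fact shows that the girth is exactly $4$ for every $e\ge 2$ when $r$ is odd. One cosmetic point: the consecutive differences along your cycle are $1,\xi,-1,-\xi$ rather than $1,\xi,1,\xi$, though of course this does not affect adjacency since $S=-S$.
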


\begin{proof}
The second statement follows from the first statement and Theorem 4.2 in \cite{LW01}.
We prove the first statement.
\par In general, $Cay(\Gamma, S)$ is triangle-free if the equation $x+y+z=0$ has no solutions in $S$.
So we shall prove that for every $0\leq j, k, l \leq 2^r-2$, the equation $\xi^j \pm \xi^k \pm \xi ^l=0$ does not hold.
We may assume that $j, k, l$ are distinct since if not, the above inequality does not hold because $\xi^j$, $\xi^k$ and $\xi^l$ are invertible.
Without loss of generality, we may consider the following three cases.
The proof below is very similar to the discussion in \cite[Section III. C]{HKCSS94}.
\begin{case}
\label{case:1}
When $\xi^j-\xi^k\pm \xi^l=0$, then we get $1+\xi^a=\xi^b$ for some  $0\leq a\neq b \leq 2^r-2$.
By squaring the both sides, we obtain $1+2\xi^a+\xi^{2a}=\xi^{2b}$.
On the other hand, we also obtain $1+\xi^{2a}=\xi^{2b}$ since $\phi(1+\xi^a)=\phi(\xi^b)$ and $\phi$ is a ring automorphism.
So $2\xi^a=0$, which contradicts the fact that the characteristic of $GR(2^e, 2^{er})$ is $2^e>2$.
\end{case}

\begin{case}
\label{case:2}
When $\xi^j+\xi^k+\xi^l=0$, then we get $1+\xi^a=-\xi^b$ for some  $0\leq a\neq b \leq 2^r-2$.
Assume that $a\neq 2b$.
Similarly for Case~\ref{case:1}, by squaring the both sides, we obtain $1+2\xi^a+\xi^{2a}=\xi^{2b}$.
On the other hand, we also obtain $1+\xi^{2a}=-\xi^{2b}$ by $\phi$.
So $2(\xi^a-\xi^{2b})=0$, which contradicts by Lemma~\ref{lem:unit}.
\end{case}

\begin{case}
\label{case:3}
Assume that $a=2b$ in Case~\ref{case:2}.
Then $1+\xi^b+\xi^{2b}=0$.
Let $\theta:=\mu(\xi) \in \mathbb{F}_{2^r}$ (recall the definition of $\mu$ in Section~\ref{sec:Galois}).
By applying $\mu$ for the above equality, we get $\theta^{2b}+\theta^b+1=0$.
Thus $\theta^b$ is a zero of the equation $x^2+x+1=0$ in $\mathbb{F}_{2^r}$.
It contradicts the fact that this quadratic equation has no solutions in $\mathbb{F}_{2^r}$ when $r$ is odd.
In fact, $tr(x^2+x+1)=r \neq 0$ for all $x \in \mathbb{F}_{2^r}$ when $r$ is odd.
Here $tr$ is the field trace from $\mathbb{F}_{2^r}$ to $\mathbb{F}_{2}$. 
\end{case}
\end{proof}

\begin{remark}
It seems to be interesting to investigate the girth of $H_{p^e, p^{er}}$ and $H_{2^e, 2^{er}}$ in general cases.
At this point, we did not obtain any result except for the case of $H_{2^e, 2^{er}}$ when $r$ is odd. 
\end{remark}

\section{Ramanujan graphs from Galois rings of characteristic $4$}
\label{sec:Const}
Here we consider the Cayley graph $H_{4, 4^r}=Cay(GR^+(4, 4^r), G_1\cup -G_1)$.
The following theorem shows that $\{H_{4, 4^r}\}_{r\geq 4}$ is an infinite family of Ramanujan graphs.
\begin{theorem}
\label{thm:main}
$H_{4, 4^r}$ is a Ramanujan graph of degree $2^{r+1}-2$ with $4^r$ vertices for all $r\geq 4$.
Moreover all eigenvalues except for the degree are in the interval $[-2^{r/2+1}-2, 2^{r/2+1}+2]$.
\end{theorem}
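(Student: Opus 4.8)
The plan is to use the same spectral description of abelian Cayley graphs as in the proof of Theorem~\ref{thm:H1}, but to replace the deep estimate of Lemma~\ref{lem:KHC} by an elementary evaluation special to characteristic $4$. First I would recall that the characters of $GR^+(4,4^r)$ are exactly the maps $\psi_\alpha(x)=i^{T(\alpha x)}$ for $\alpha\in GR(4,4^r)$, where $i=\sqrt{-1}$ (the characteristic being $4$) and $T$ is the trace to $\mathbb{Z}_4$; here $\psi_\alpha$ is non-trivial iff $\alpha\neq0$. Since $G_1$ and $-G_1$ are disjoint and $\psi_\alpha(-s)=\overline{\psi_\alpha(s)}$, the eigenvalue attached to $\psi_\alpha$ is
\[
\sum_{s\in G_1\cup -G_1}\psi_\alpha(s)=2\,\mathrm{Re}(K_\alpha),\qquad K_\alpha:=\sum_{s\in G_1}\psi_\alpha(s).
\]
Hence, together with the degree and vertex count of Proposition~\ref{prop:deg}, the whole theorem reduces to the bound $|K_\alpha|\le 2^{r/2}+1$ for every $\alpha\neq0$, since then $|2\,\mathrm{Re}(K_\alpha)|\le 2|K_\alpha|\le 2^{r/2+1}+2$. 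I note that this is exactly the interval obtained by specializing Theorem~\ref{thm:H1} to $e=2$; the point of this section is to re-derive it \emph{without} Lemma~\ref{lem:KHC}.

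Next I would split according to whether $\alpha$ is a unit. If $\alpha$ is a non-unit, write $\alpha=2b$ with $b\in G_1$; then $\psi_\alpha(s)=i^{2T(bs)}=(-1)^{T(bs)}=(-1)^{\mathrm{tr}(\mu(b)\mu(s))}$, where $\mathrm{tr}$ is the field trace of $\mathbb{F}_{2^r}$. As $s$ runs over $G_1$ its image $\mu(s)$ runs over $\mathbb{F}_{2^r}^{*}$, so $K_\alpha=\sum_{z\in\mathbb{F}_{2^r}^{*}}(-1)^{\mathrm{tr}(z)}=-1$, which trivially satisfies the bound. This disposes of every non-unit $\alpha$ by an elementary field-character computation.

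The substantial case is $\alpha$ a unit. Here I would pass to the full Teichmüller set $\mathcal{T}=G_1\cup\{0\}$ and write $K_\alpha=S_\alpha-1$ with $S_\alpha:=\sum_{s\in\mathcal{T}}i^{T(\alpha s)}$, so that it suffices to prove $|S_\alpha|\le 2^{r/2}$. To evaluate $S_\alpha$, I would use the characteristic-$4$ addition law on $\mathcal{T}$: for $s,t\in\mathcal{T}$ one has $s+t=u+2v$ with $\mu(u)=\mu(s)+\mu(t)$ and $\mu(v)^2=\mu(s)\mu(t)$, together with the identity $2x=2\tau(\mu(x))$ for all $x$ (where $\tau$ is the Teichmüller lift). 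Feeding this into $|S_\alpha|^2=\sum_{s,t\in\mathcal{T}}i^{T(\alpha(s-t))}$ rewrites the exponent $T(\alpha(s-t))\bmod 4$ as a field-trace expression that is linear-plus-quadratic in $\mu(s),\mu(t)$; equivalently one obtains a $\mathbb{Z}_4$-valued trace formula $T(\alpha s)\equiv \mathrm{tr}(\mu(\alpha)\mu(s))+2\,q(\mu(s))\pmod 4$ exhibiting $S_\alpha$ as a quadratic exponential sum over $\mathbb{F}_{2^r}$. Because $\alpha$ is a unit, $\mu(\alpha)\neq0$ makes the relevant form non-degenerate, and the sum collapses to modulus exactly $2^{r/2}$. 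This evaluation—the careful bookkeeping of the \emph{carry} $2$-part and the verification that the resulting quadratic form is non-degenerate for every unit $\alpha$—is the elementary but delicate computation drawn from \cite{BHK92}, and is the step I expect to be the main obstacle, especially in reconciling odd and even $r$, where the plain quadratic Gauss sum behaves differently and the linear factor $i^{\mathrm{tr}(\mu(\alpha)\mu(s))}$ must be tracked.

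Finally, granting $|K_\alpha|\le 2^{r/2}+1$, I would conclude the Ramanujan property by the elementary inequality $2^{r/2+1}+2\le 2\sqrt{2^{r+1}-3}=2\sqrt{d-1}$. Setting $t=2^{r/2}$ and squaring (both sides being positive), this becomes $t^2-2t-4\ge0$, which holds precisely for $t\ge1+\sqrt5$, i.e.\ for $r\ge4$; this matches the stated range. The same bound shows $2^{r/2+1}+2<d$, so $-d$ is not an eigenvalue and the graphs are non-bipartite expanders meeting the Ramanujan threshold.
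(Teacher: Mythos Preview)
Your outline is sound and reaches the same conclusion as the paper, but the route you propose for the unit case is genuinely different from the one the paper actually takes (following \cite{BHK92} as presented there). For non-units $\alpha=2b$ you reduce $K_\alpha$ directly to a linear additive-character sum over $\mathbb{F}_{2^r}^{*}$ and get $K_\alpha=-1$; this is in fact simpler than the paper's treatment, which runs the same $|1+\zeta_\gamma|^2$ machinery in both cases. For units, however, the paper does \emph{not} invoke the Teichm\"uller carry law or any quadratic Gauss sum. Instead it exploits the multiplicative structure: the cosets of $G_1$ in $GR^*(4,4^r)$ are exactly $\gamma G_1,\ -\gamma G_1,\ (1-\xi)\gamma G_1,\ldots,(1-\xi^{2^r-2})\gamma G_1$, so expanding $|\zeta_\gamma|^2=\sum_{t}\sum_{s\in G_1}\omega^{T(\gamma(1-\xi^t)s)}$ and using character orthogonality ($\sum_{s\in GR^*(4,4^r)}\omega^{T(s)}=0$) yields $|1+\zeta_\gamma|^2=2^r$ in a couple of lines, with no need to analyse a quadratic form or separate odd/even $r$. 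Your approach would also give $|S_\alpha|=2^{r/2}$, but at the cost of the ``delicate bookkeeping'' you flag: tracking the $2$-adic carry in $T(\tau(ax))$ and checking non-degeneracy for every unit $\alpha=\tau(a)+2\tau(b)$. The coset trick buys you exactly the avoidance of that step; conversely, your method makes the link to classical Gauss sums explicit and is closer in spirit to the $\mathbb{Z}_4$-linearity literature \cite{HKCSS94}. Your final inequality $t^2-2t-4\ge0$ with $t=2^{r/2}$ correctly pins down $r\ge4$.
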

We can prove it by using Lemma~\ref{lem:KHC}, but here we prove by an elementary eigenvalue estimation based on the discussion in \cite{BHK92}. 
Since $G_1$ and $-G_1$ are disjoint, it suffices to evaluate $\zeta_{\gamma}:=\sum_{s\in G_1}\omega^{T(\gamma s)}$ for each $\gamma \in GR(4, 4^r)$ where $\omega:=\sqrt{-1}$, a primitive $4$th root of unity.
It is easy to see that $\zeta_0=2^r-1$.
Boztas-Hammons-Kumar~\cite[Theorem 4]{BHK92} gave the following evaluation of this sum by using only elementary calculation 
and an observation based on fundamental facts of $GR(4, 4^r)$.

\begin{lemma}[Boztas-Hammons-Kumar~\cite{BHK92}]
\label{lem:zeta}
If $\gamma \in GR^*(4, 4^r)$, then
\begin{equation}
\label{eq:zetaunit}
2^{r/2}-1 \leq |\zeta_{\gamma}|\leq 2^{r/2}+1.
\end{equation}
If $\gamma \in GR(4, 4^r) \setminus GR^*(4, 4^r)$ and $\gamma \neq 0$ then
\begin{equation}
\label{eq:zetanonunit}
\zeta_{\gamma}=-1.
\end{equation}
\end{lemma}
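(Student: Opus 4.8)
The plan is to introduce the full Teichmüller sum $\Sigma_\gamma := \sum_{s\in G_1\cup\{0\}}\omega^{T(\gamma s)} = \zeta_\gamma+1$ and to reduce both assertions to the single statement that $|\Sigma_\gamma| = 2^{r/2}$ for units $\gamma$. Indeed, once this is known, \eqref{eq:zetaunit} is immediate from $\zeta_\gamma = \Sigma_\gamma - 1$ and the triangle inequality $\bigl|\,|\Sigma_\gamma|-1\,\bigr|\le|\zeta_\gamma|\le|\Sigma_\gamma|+1$; this is precisely where the additive $\pm1$ in the statement originates. The whole argument rests on two elementary facts from Section~\ref{sec:Galois}: that $\phi$ acts as squaring on $G_1\cup\{0\}$ (so $T(w)=\sum_{j=0}^{r-1}w^{2^j}$ there and $w^{2^r}=w$), and that $\mu$ intertwines $T$ with the field trace $tr$ modulo $2$, i.e. $T(x)\equiv tr(\mu(x))\pmod 2$.

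First I would settle the non-unit case \eqref{eq:zetanonunit}, which is the source of the clean dichotomy. A nonzero non-unit is $\gamma=2\beta$ with $\beta\in G_1$, so $T(\gamma s)=2T(\beta s)$ and $\omega^{T(\gamma s)}=(-1)^{T(\beta s)}=(-1)^{tr(\mu(\beta)\mu(s))}$. As $s$ runs over $G_1$ the image $\mu(s)$ runs over $\mathbb{F}_{2^r}^*$, and since $\mu(\beta)\ne0$ the product $\mu(\beta)\mu(s)$ also runs over $\mathbb{F}_{2^r}^*$; hence $\zeta_\gamma=\sum_{u\in\mathbb{F}_{2^r}^*}(-1)^{tr(u)}=-1$ because $tr$ is a nontrivial linear functional. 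Conceptually, a non-unit contributes no genuinely quadratic term and the sum is merely linear.

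For the unit case I would write $\gamma=\gamma_0+2\gamma_1$ with $\gamma_0\in G_1$ and substitute $w=\gamma_0 s$, a bijection of $G_1\cup\{0\}$, turning $\Sigma_\gamma$ into $\sum_{w}\omega^{T(w)}(-1)^{tr(\mu(\beta)\mu(w))}$ with $\beta=\gamma_1\gamma_0^{-1}$. The key computation is to pin down $T(w)\bmod 4$ for Teichmüller $w$ in terms of $\bar w:=\mu(w)$: squaring $T(w)=\sum_{j}w^{2^j}$ and using $w^{2^r}=w$ gives $T(w)^2=T(w)+2\sum_{0\le j<k\le r-1}w^{2^j+2^k}$, from which one reads off $T(w)\equiv tr(\bar w)+2\,g(\bar w)\pmod 4$, where $g(\bar w)=\sum_{j<k}\bar w^{2^j+2^k}$ is $\mathbb{F}_2$-valued (it is fixed by Frobenius) and quadratic in $\bar w$. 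Substituting this expansion recasts $\Sigma_\gamma=\sum_{\bar w\in\mathbb{F}_{2^r}}\omega^{Q(\bar w)}$ as a Gauss sum for the $\mathbb{Z}_4$-valued quadratic form $Q(\bar w)=tr(\bar w)+2\bigl(g(\bar w)+tr(\mu(\beta)\bar w)\bigr)$; expanding $|\Sigma_\gamma|^2$ and applying orthogonality of the additive characters of $\mathbb{F}_{2^r}$ collapses it to $2^r$, so that $|\Sigma_\gamma|=2^{r/2}$ and \eqref{eq:zetaunit} follows.

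The main obstacle is the evaluation of that Gauss sum, i.e. showing the form $Q$ is non-degenerate so that $|\Sigma_\gamma|^2$ is exactly $2^r$ rather than a smaller power; this is the elementary-but-delicate step, carried out purely with the structure of $GR(4,4^r)$, that replaces the appeal to Lemma~\ref{lem:KHC}. The role of the hypothesis $\gamma\in GR^*(4,4^r)$ is exactly to guarantee $\gamma_0\ne0$: this both makes the reparametrization $w=\gamma_0 s$ legitimate and ensures that the genuinely quadratic Teichmüller contribution $\omega^{T(\gamma_0 s)}$ is present, which is what supplies the nonsingular quadratic part of $Q$ and hence the full magnitude $2^{r/2}$.
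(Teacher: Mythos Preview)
Your argument is correct but follows a genuinely different route from the paper. The paper never descends to $\mathbb{F}_{2^r}$ or isolates a quadratic form: for both cases it expands $|\zeta_\gamma|^2$ as a double sum over $G_1\times G_1$, uses the coset decomposition $GR^*(4,4^r)=\gamma G_1\cup(-\gamma G_1)\cup\bigcup_{t=1}^{2^r-2}(1-\xi^t)\gamma G_1$ to rewrite this as $2^r-1+\sum_{s\in GR^*(4,4^r)}\omega^{T(s)}-\zeta_\gamma-\overline{\zeta_\gamma}$ (with $(-1)^{T(s)}$ in place of $\omega^{T(s)}$ in the non-unit case), and then evaluates the unit sum as $0$ (respectively $-2^r$) via orthogonality over the whole ring together with a balancedness argument on $2GR(4,4^r)$; this gives $|1+\zeta_\gamma|^2=2^r$ (respectively $0$) directly. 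Your approach instead passes to $\mathbb{F}_{2^r}$: the non-unit case becomes a one-line linear character sum, shorter than the paper's treatment, while for units you recognise $\Sigma_\gamma$ as a Gauss sum for a $\mathbb{Z}_4$-valued quadratic form whose polarisation $Q(x+y)-Q(x)-Q(y)\equiv 2\,tr(xy)\pmod4$ is the non-degenerate trace pairing, so orthogonality on $\mathbb{F}_{2^r}$ collapses $|\Sigma_\gamma|^2$ to $2^r$ in one stroke. The paper's method is more uniform across the two cases and avoids computing $T(w)\bmod4$ explicitly; yours explains structurally \emph{why} the magnitude is exactly $2^{r/2}$ and dispatches the non-unit case more cleanly. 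One small slip in your final paragraph: if $Q$ were degenerate the Gauss sum would have absolute value $0$ or a \emph{larger} power of $2$, not a smaller one; non-degeneracy is precisely what pins it to $2^r$.
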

\begin{proof}
First for each $\gamma \in GR^*(4, 4^r)$, the residue classes of $G_1$ in $GR^*(4, 4^r)$ can be expressed as follows:
\begin{equation}
\label{eq:residue}
\gamma G_1,\; -\gamma G_1,\; (1-\xi)\gamma G_1,\; (1-\xi^2)\gamma G_1,\; \ldots,\; (1-\xi^{2^r-2})\gamma G_1.
\end{equation}
and under the above partition, one can also see that
\begin{equation}
\label{eq:nonunit}
GR(4, 4^r)\setminus GR^*(4, 4^r)=2\gamma G_1 \cup \{0\}.
\end{equation}
This is a consequence of Lemma 1 in \cite{BHK92} (see also the equation (4.4) in \cite{BHK92}).
\par Next when $\gamma \in GR^*(4, 4^r)$, 
\begin{align}
\label{eq:zeta1}
\begin{split}
|\zeta_{\gamma}|^2&=\sum_{t=0}^{2^r-2}\sum_{s \in G_1}\omega^{T(\gamma (1-\xi^t)s)}\\
&=\sum_{s \in G_1}\omega^{T(0)}+\sum_{t=1}^{2^r-2}\sum_{s \in G_1}\omega^{T(\gamma (1-\xi^t)s)}\\
&=2^r-1+\sum_{t=1}^{2^r-2}\sum_{s \in G_1}\omega^{T(\gamma (1-\xi^t)s)}.
\end{split}
\end{align}
Then by (\ref{eq:residue}),
\begin{align}
\label{eq:zeta2}
\begin{split}
\sum_{t=1}^{2^r-2}\sum_{s \in G_1}\omega^{T(\gamma (1-\xi^t)s)}&=\sum_{s \in GR^*(4, 4^r)}\omega^{T(s)}-\sum_{s \in G_1}\omega^{T(\gamma s)}-\sum_{s \in -G_1}\omega^{T(\gamma s)}\\
&=\sum_{s \in GR^*(4, 4^r)}\omega^{T(s)}-\zeta_{\gamma}-\overline{\zeta_{\gamma}}.
\end{split}
\end{align}
Here for a complex number $z$, $\overline{z}$ is the complex conjugate of $z$. 
Remark that 
\[
\sum_{s \in -G_1}\omega^{T(\gamma s)}=\sum_{s \in G_1}\omega^{-T(\gamma s)}=\overline{\zeta_{\gamma}}
\]
because $\overline{\omega}=\omega^{-1}=\omega^3$, $\omega^0=\omega^{-0}=1$ and $\omega^2=\omega^{-2}=-1$.
Now, since in the $0$-$2$ sequence $(T(s))_{s\in 2GR(4, 4^r)}$, the frequency of $0$ is equal to the number of occurrences of $2$, we get 
\begin{align}
\label{eq:zeta2-1}
\begin{split}
\sum_{s \in 2GR(4, 4^r)}\omega^{T(s)}=0.
\end{split}
\end{align}
In fact, let $K:=\{s \in 2GR(4, 4^r) \mid T(s)=0\}$. 
It is easy to see that there exists $\alpha \in 2GR(4, 4^r)$ such that $T(\alpha)=2$. 
If $\alpha \in 2GR(4, 4^r)$ and $T(\alpha)=2$, then $T(\alpha+\beta)=2$ for all $\beta \in K$. 
Conversely, if $\alpha'\in 2GR(4, 4^r)$ such that $T(\alpha')=2$, $T(\alpha+(\alpha'-\alpha))=2$ and $\alpha'-\alpha \in K$ since $2GR(4, 4^r)$ is an ideal.
Thus $|\{s \in 2GR(4, 4^r) \mid T(\alpha)=2\}|=|\{\alpha+\beta \mid \beta \in K\}|=|K|$.
Thus by the orthogonal relation of characters and (\ref{eq:zeta2-1}), we obtain
\begin{align}
\label{eq:zeta2-2}
\begin{split}
\sum_{s \in GR^*(4, 4^r)}\omega^{T(s)}&=\sum_{s \in GR(4, 4^r)}\omega^{T(s)}-\sum_{s \in 2GR(4, 4^r)}\omega^{T(s)}\\
&=0-0=0.
\end{split}
\end{align}
Thus by (\ref{eq:zeta1}), (\ref{eq:zeta2}) and (\ref{eq:zeta2-2}),
we get $|1+\zeta_{\gamma}|^2=2^r$, proving the first statement by the triangle inequality.
\par Next when $\gamma \in GR(4, 4^r) \setminus GR^*(4, 4^r)$ such that $\gamma\neq 0$, $\gamma=2 \gamma'$ for some $\gamma' \in GR^*(4, 4^r)$.
As (\ref{eq:zeta1}) and  (\ref{eq:zeta2}), we get
\begin{align}
\label{eq:zeta3}
\begin{split}
|\zeta_{\gamma}|^2&=\sum_{t=0}^{2^r-2}\sum_{s \in G_1}\omega^{T(2\gamma' (1-\xi^t)s)}\\
&=2^r-1+\sum_{t=1}^{2^r-2}\sum_{s \in G_1}\omega^{T(2\gamma' (1-\xi^t)s)},
\end{split}
\end{align}
\begin{align}
\label{eq:zeta4}
\begin{split}
\sum_{t=1}^{2^r-2}\sum_{s \in G_1}\omega^{T(2\gamma' (1-\xi^t)s)}&=\sum_{s \in GR^*(4, 4^r)}\omega^{T(2s)}-\sum_{s \in G_1}\omega^{T(2\gamma' s)}-\sum_{s \in -G_1}\omega^{T(2\gamma' s)}\\
&=\sum_{s \in GR^*(4, 4^r)}(-1)^{T(s)}-\zeta_{\gamma}-\overline{\zeta_{\gamma}}.
\end{split}
\end{align}
And by (\ref{eq:nonunit}), we see that   
\begin{align}
\label{eq:zeta5}
\begin{split} 
\sum_{s \in GR^*(4, 4^r)}(-1)^{T(s)}&=-\sum_{s \in GR(4, 4^r) \setminus GR^*(4, 4^r)}(-1)^{T(s)}\\
&=-\sum_{s \in 2\gamma' G_1 \cup \{0\}}(-1)^{T(s)}\\
&=-\sum_{s \in \gamma' G_1 \cup \{0\}}1^{T(s)}=-2^r.
\end{split}
\end{align}
Thus by (\ref{eq:zeta3}), (\ref{eq:zeta4}) and (\ref{eq:zeta5}), we get $|1+\zeta_{\gamma}|^2=0$, proving the second statement. 
\end{proof}

\begin{remark}
The proofs of (\ref{eq:zeta2-2}) and (\ref{eq:zeta5}) are obtained by a slight modification of the original proof in \cite[p.1105]{BHK92}.
By our proof, we can reduce the facts of Galois rings which we need to use.
\end{remark}

Now we are ready to prove the Theorem~\ref{thm:main}.
\begin{proof}[Proof of Theorem~\ref{thm:main}]
By Lemma~\ref{lem:zeta} we see that 
\begin{equation}
\label{eq:lambda}
\lambda(H_{4, 4^r}) \leq 2^{r/2+1}+2.
\end{equation}
Thus by Proposition~\ref{prop:deg} and (\ref{eq:lambda}),
$H_{4^r}$ is a Ramanujan graph for $r \geq 4$.  
The second statement can also be obtained from Lemma~\ref{lem:zeta}.
\end{proof}

\section{Other spectral properties of $H_{4, 4^r}$}
\label{sec:spec} 
In this section, we show some interesting spectral properties of the graphs $H_{4^r}$. 
At first, we review some definitions.
A graph $G$ is called {\it integral} if every eigenvalue of $G$ is an integer.
Let $E(G):=\sum_{\lambda}|\lambda|$ where $\lambda$ moves over all eigenvalues of $G$.
$E(G)$ is called the {\it energy} of $G$. 
Moreover $G$ is called {\it hyperenergetic} if $G$ has $n$ vertices and $E(G)>2(n-1)$.
Integral graphs and the energy of graphs have been investigated in graph theory and other various areas such as mathematical chemistry and number theory.
For details and related works, see \cite{BCRSS02}, \cite{LSG12} and \cite{LZ18}.   
\par Now we get the following results. 
\begin{theorem}
\label{thm:integr}
For $r\geq 2$, $H_{4, 4^r}$ is integral.    
\end{theorem}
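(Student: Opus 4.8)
The plan is to use the character-sum description of the spectrum already invoked in the proof of Theorem~\ref{thm:H1}, and then to observe that, because $\omega=\sqrt{-1}$ is a fourth root of unity, every relevant character sum is a Gaussian integer. First I would recall that the characters of the abelian group $GR^+(4,4^r)$ are exactly the maps $\psi_\gamma(x)=\omega^{T(\gamma x)}$ for $\gamma\in GR(4,4^r)$: since $T$ is a surjective $\mathbb{Z}_4$-linear map and the pairing $(\gamma,x)\mapsto T(\gamma x)$ is nondegenerate, the assignment $\gamma\mapsto\psi_\gamma$ is an injective homomorphism from $GR(4,4^r)$ into the character group, hence a bijection by a cardinality count. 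Consequently the multiset of eigenvalues of $H_{4,4^r}$ is precisely $\{\sum_{s\in G_1\cup -G_1}\psi_\gamma(s)\mid \gamma\in GR(4,4^r)\}$.

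Next I would rewrite each eigenvalue in terms of the sum $\zeta_\gamma=\sum_{s\in G_1}\omega^{T(\gamma s)}$ studied in Lemma~\ref{lem:zeta}. Exactly as noted in the proof of Lemma~\ref{lem:zeta}, substituting $s=-s'$ and using $\omega^{-k}=\overline{\omega^{k}}$ gives $\sum_{s\in -G_1}\omega^{T(\gamma s)}=\overline{\zeta_\gamma}$, so the eigenvalue attached to $\psi_\gamma$ equals
\[
\zeta_\gamma+\overline{\zeta_\gamma}.
\]
The decisive observation is now structural rather than computational: since $T(\gamma s)\in\mathbb{Z}_4$ for every $s$, each summand $\omega^{T(\gamma s)}$ lies in $\{1,\,\omega,\,-1,\,-\omega\}\subset\mathbb{Z}[\omega]=\mathbb{Z}[i]$, and therefore $\zeta_\gamma$ is a Gaussian integer. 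For any $z\in\mathbb{Z}[i]$ the quantity $z+\overline{z}$ is twice its real part, hence an integer (indeed an even integer). This shows at once that every eigenvalue of $H_{4,4^r}$ is an integer.

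For completeness I would cross-check the three regimes of Lemma~\ref{lem:zeta}: the value $\gamma=0$ yields $\zeta_0+\overline{\zeta_0}=2(2^r-1)=2^{r+1}-2$, the degree; a nonzero non-unit $\gamma$ yields $\zeta_\gamma=-1$ and hence the eigenvalue $-2$; and a unit $\gamma$ yields $\zeta_\gamma+\overline{\zeta_\gamma}\in 2\mathbb{Z}$ by the Gaussian-integer argument, consistent with the constraint $|1+\zeta_\gamma|^2=2^r$ from Lemma~\ref{lem:zeta}.

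I do not expect a genuine obstacle here. Once the eigenvalues are written as $\zeta_\gamma+\overline{\zeta_\gamma}$ with $\zeta_\gamma\in\mathbb{Z}[i]$, integrality is immediate, and the whole content reduces to the remark that a fourth root of unity forces the character sums into $\mathbb{Z}[i]$. The only point requiring real care is the parametrization of characters by the trace pairing; everything after that is formal. It is also worth recording, as an immediate byproduct, that all eigenvalues of $H_{4,4^r}$ are in fact \emph{even} integers.
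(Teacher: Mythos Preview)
Your argument is correct and is essentially the same as the paper's one-line proof: both rest on the observation that $\zeta_\gamma$ is a Gaussian integer (equivalently, has integer real and imaginary parts), so the eigenvalue $\zeta_\gamma+\overline{\zeta_\gamma}=2\,\mathrm{Re}(\zeta_\gamma)$ is an integer. Your write-up is simply more explicit about the character parametrization and the passage from $\zeta_\gamma$ to the eigenvalue, and your side remark that every eigenvalue is in fact an \emph{even} integer is a pleasant bonus not recorded in the paper.
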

\begin{proof}
Since the real part and imaginary part of $\zeta_{\gamma}$ are integers and all eigenvalue of $H_{4, 4^r}$ must be real, 
the theorem is proved.
\end{proof} 
\begin{theorem}
\label{thm:hypen}
For $r\geq 2$, $H_{4, 4^r}$ is hyperenergetic.   
\end{theorem}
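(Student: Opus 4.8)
The plan is to compute the energy $E(H_{4,4^r})$ essentially exactly from the spectral data already assembled, rather than to bound it crudely. Recall from the proofs of Theorem~\ref{thm:H1} and Theorem~\ref{thm:main} that, since $G_1$ and $-G_1$ are disjoint, the eigenvalue attached to $\gamma\in GR(4,4^r)$ is $\lambda_\gamma=\zeta_\gamma+\overline{\zeta_\gamma}=2\operatorname{Re}\zeta_\gamma$. Lemma~\ref{lem:zeta} then splits the spectrum into three parts: the degree $\lambda_0=2^{r+1}-2$ at $\gamma=0$; the $2^r-1$ nonzero non-units, each giving $\zeta_\gamma=-1$ and hence $\lambda_\gamma=-2$; and the $4^r-2^r$ units. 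Consequently $E(H_{4,4^r})=(2^{r+1}-2)+2(2^r-1)+\sum_{\gamma\in GR^*(4,4^r)}|\lambda_\gamma|$, and the whole problem reduces to bounding the last sum from below.

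The first point I would record is that the naive estimate $\sum_{\gamma\in GR^*(4,4^r)}|\lambda_\gamma|\ge\bigl(\sum_{\gamma}\lambda_\gamma^2\bigr)/\max_\gamma|\lambda_\gamma|$, using the Ramanujan bound $\max|\lambda_\gamma|\le 2^{r/2+1}+2$ of Theorem~\ref{thm:main} in the denominator, is too lossy and in fact fails for the smallest case $r=2$. The fix is to exploit that the unit eigenvalues take only very few values. The proof of Lemma~\ref{lem:zeta} gives $|1+\zeta_\gamma|^2=2^r$ for every unit $\gamma$; moreover $\zeta_\gamma$ is an integer combination of the fourth roots of unity, so $\zeta_\gamma=a+b\omega$ with $a,b\in\mathbb{Z}$ and $(1+a)^2+b^2=2^r$. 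An elementary count of the representations of a power of two as a sum of two squares (equivalently, that $2$ is ramified in $\mathbb{Z}[i]$, so only the four associates of $(1+i)^r$ have norm $2^r$) shows that $\operatorname{Re}\zeta_\gamma$, and hence $\lambda_\gamma=2\operatorname{Re}\zeta_\gamma$, takes at most the values $-2\pm 2^{r/2+1}$ and $-2$ when $r$ is even, and at most $-2\pm 2^{(r+1)/2}$ when $r$ is odd.

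With this short list of possible eigenvalues in hand, I would pin down the multiplicities using two global identities together with the vertex count. Since $H_{4,4^r}$ has no loops and is $(2^{r+1}-2)$-regular on $4^r$ vertices, the sum of all eigenvalues is $0$ and the sum of their squares equals the number of closed walks of length two, namely $4^r(2^{r+1}-2)$; together with the count $4^r-2^r$ of units these yield three linear equations for the (at most three) unknown multiplicities, which I can solve in closed form. Substituting back into $E(H_{4,4^r})=\sum_\gamma|\lambda_\gamma|$ produces an explicit expression whose leading term is $2^{5r/2}$ for even $r$ and $2^{(5r+1)/2}$ for odd $r$; in either case the relevant exponent exceeds $2r+1$, so this dominates the target $2(4^r-1)=2^{2r+1}-2$ for all $r\ge 2$. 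The one delicate place is the boundary, where the negative lower-order terms are not negligible; here I would simply verify the smallest cases by hand (for $r=2$ the spectrum is $6,\,2^{(6)},\,(-2)^{(9)}$, giving $E=36>30$, and similarly $r=3$) and let the leading-term comparison cover all larger $r$.

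The main obstacle, and the real content of the argument, is precisely this reduction to finitely many explicit eigenvalue values via the norm equation $|1+\zeta_\gamma|^2=2^r$: without it the energy is controlled only through $\sum_\gamma\lambda_\gamma^2$, which on its own is too weak near $r=2$. Once that discreteness is in place, the two trace-type identities do all the remaining work and the final inequality is a routine algebraic verification.
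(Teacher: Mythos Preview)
Your approach is correct and takes a genuinely different route from the paper's. The paper's proof simply asserts, citing Lemma~\ref{lem:zeta}, that each of the $4^r-2^r$ unit eigenvalues has absolute value at least $2^{r/2+1}-1$, and then adds up the resulting lower bounds. You instead go back to the exact identity $|1+\zeta_\gamma|^2=2^r$ from the proof of Lemma~\ref{lem:zeta}, combine it with $\zeta_\gamma\in\mathbb{Z}[i]$ and the ramification of $2$ in the Gaussian integers to list the finitely many possible values of $\lambda_\gamma=2\operatorname{Re}\zeta_\gamma$, and then solve for the multiplicities using $\sum_\gamma\lambda_\gamma=0$ and $\sum_\gamma\lambda_\gamma^2=4^r(2^{r+1}-2)$. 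This yields the energy essentially exactly, with the parity split and the direct check at $r=2,3$ handling the boundary.

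What your approach buys is not just precision but correctness at a point where the paper's argument is fragile. Lemma~\ref{lem:zeta} bounds $|\zeta_\gamma|$, not $|\operatorname{Re}\zeta_\gamma|$, so it does not by itself give a lower bound on $|\lambda_\gamma|$; and indeed your analysis shows that for even $r$ the value $\lambda_\gamma=-2$ occurs among the units (your $r=2$ spectrum $6,\,2^{(6)},\,(-2)^{(9)}$ already exhibits this), so the paper's uniform lower bound of $2^{r/2+1}-1$ on the unit eigenvalues is not valid as stated. Your detour through $\mathbb{Z}[i]$ is exactly the extra ingredient needed to repair this: once the eigenvalue set is discrete and explicit, the two trace identities pin down the multiplicities and the hyperenergetic inequality becomes a clean leading-term comparison $2^{5r/2}$ (resp.\ $2^{(5r+1)/2}$) versus $2^{2r+1}$. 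The cost is a modest amount of extra algebra, all of it routine.
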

\begin{proof}
Let $n_r$ be the number of vertices of $H_{4, 4^r}$.
By Lemma~\ref{lem:zeta}, there are $|GR^*(4, 4^r)|=4^r-2^r$ eigenvalues such that their absolute values are more than $2^{r/2+1}-1$.
And there are $2^r-1$ eigenvalues which is equal to $-2$ and one eigenvalue $2^r-1$.
Thus, 
\begin{align}
\begin{split}
E(H_{4^r})&\geq |2^{\frac{r}{2}+1}-1|\cdot(4^r-2^r)+|-2|\cdot(2^r-1)+|2^r-1|\cdot 1\\
&=2(n_r)^{\frac{5}{4}}-n_r-2(n_r)^{\frac{3}{4}}+4(n_r)^{\frac{1}{2}}-3
\end{split} 
\end{align}
So $H_{4, 4^r}$ is hyperenergetic if $r\geq 2$.
\end{proof}

\section{Concluding remark}
\label{sec:concl}
At last, we remark that for each invertible element $\gamma \in GR^*(p^e, p^{er})$, we can define two Cayley graphs $H_{2^e, 2^{er}}^\gamma:=Cay(GR^+(2^e, 2^{er}), \gamma G_1 \cup -\gamma G_1)$ 
and $H_{p^e, p^{er}}^\gamma:=Cay(GR^+(p^e, p^{er}), \gamma G_1)$.
For these graphs, we can obtain the same results as Theorem~\ref{thm:H1}, \ref{thm:girth}., \ref{thm:main}.
Especially, we can prove the same statements as Theorem~\ref{thm:integr} and \ref{thm:hypen}.
On the other hand, for each $\gamma \in pGR(p^e, p^{er})$, we can define Cayley digraphs as above.
However, they are not strongly-connected since the multiplicity of the largest eigenvalue is more than $2$ and by the Perron-Frobenius theorem.

\section*{acknowledgements}
We would like to deeply appreciate Masanori Sawa for his valuable comments.
And we greatly appreciate Sanming Zhou for telling us the survey \cite{LZ18}.

\end{document}